\def\version{Version 2 -- last updated 03/04/2026
\hfill\href{https://arxiv.org/abs/?}{arXiv:2603.29425}
}

\documentclass[11pt,a4paper]{amsart}
\usepackage{fullpage}

\usepackage[hypertexnames=false]{hyperref}

\usepackage{amssymb,amscd,amsxtra}
\usepackage{autobreak}

\usepackage{moremath}

\usepackage[alphabetic,lite]{amsrefs}

\usepackage[autobold]{mathfixs}

\usepackage[scr,scaled=1.1]{rsfso}

\usepackage{leftidx}

\usepackage[clockwise]{rotating}

\usepackage{xkvltxp}
\usepackage[nomargin,inline]{fixme}
\fxusetheme{color}
\definecolor{fxnote}{rgb}{0.0000,0.6000,0.0000}
\usepackage[]{todonotes}

\usepackage{color-edits}
\addauthor[AB]{AB}{magenta}

\usepackage{mathtools}
\usepackage{relsize}

\usepackage{tikz}
\usetikzlibrary{arrows,calc}

\usepackage{tikz-cd}
\usepackage{tikz-network}

\usepackage{xcolor}
\usepackage[all,poly,web,color,matrix,arrow]{xy}
\newdir{ >}{{}*!/-8pt/@{>}}

\usepackage{braket}

\newtheorem{thm}{Theorem}[section]

\newtheorem{prop}[thm]{Proposition}

\theoremstyle{definition}
\newtheorem{rem}[thm]{Remark}
\newtheorem*{rem*}{Remark}
\newtheorem{defn}[thm]{Definition}

\newtheorem{conj}[thm]{Conjecture}

\numberwithin{equation}{section}
\numberwithin{figure}{section}

\def\:{\colon}
\def\.{\cdot}

\def\<{\left\langle}
\def\>{\right\rangle}
\def\({\left(}
\def\){\right)}

\def\epsilon{\varepsilon}
\def\phi{\varphi}

\def\leq{\leqslant}
\def\geq{\geqslant}

\def\Lra{\Longrightarrow}

\def\bar#1{\overline{#1}}

\def\tilde#1{\widetilde{#1}}
\def\iso{\cong}

\def\F{\mathbb{F}}

\def\Z{\mathbb{Z}}

\DeclareMathOperator{\Ext}{Ext}

\DeclareMathOperator{\Hom}{Hom}

\def\O{\mathrm{O}}
\def\SO{\mathrm{SO}}

\def\SU{\mathrm{SU}}
\def\String{\mathrm{String}}

\def\kO{{k\mathrm{O}}}

\DeclareMathOperator{\Sq}{Sq}

\def\dlQ{\mathrm{Q}}

\def\QS0{\dlQ S^0}
\def\QSo0{\dlQ_0S^0}
\def\StA{\mathcal{A}}

\def\StP{\mathcal{P}}

\def\PL{\mathrm{PL}}
\def\TOP{\mathrm{TOP}}
\def\G{\mathrm{G}}

\title[Poincar\'e duality spaces related to the Joker]
{Poincar\'e duality spaces related to the Joker}
\author{Andrew Baker}
\date{\version}
\address{
School of Mathematics \& Statistics,
University of Glasgow, Glasgow G12~8QQ, Scotland.}
\email{andrew.j.baker@glasgow.ac.uk}
\urladdr{http://www.maths.gla.ac.uk/$\sim$ajb}
\thanks{Thanks to Diarmuid Crowley and Csaba Nagy 
for helpful discussions, and to John Rognes for
pointing out a relevant reference. Don Davis has 
also recently drawn attention to the homogeneous 
space discussed in Section~\ref{sec:Manifold}.}
\keywords{Poincar\'e duality algebra, Poincar\'e 
duality space, Steenrod algebra, obstruction theory}
\subjclass[2020]{Primary 57P10; Secondary 57R67, 55S10, 55S35}

\begin{document}

\begin{abstract}
The well known Joker $\StA(1)$-module of Adams 
\& Priddy is known to be realisable as the 
cohomology of a $1$-connected space. By attaching 
an extra cell we obtain an $8$-dimensional 
Poincar\'e duality space whose mod~$2$ cohomology 
realising is an unstable $\StA$-algebra. We 
use obstruction theory to show that this admits 
a $\PL$-structure. Although we are unable to 
show it is smoothable, it turns out that the 
cohomology can be realised as that of a 
homogeneous space.
\end{abstract}

\maketitle


\section*{Introduction}

In this note we investigate an $8$-dimensional
Poincar\'e duality space whose mod~$2$ cohomology 
as a Steenrod module algebra is related to the 
Joker module of Adams \& Priddy~\cite{JFA&SBP}. 
It turns out that such a PD space always admits 
a $\PL$-structure and its cohomology can be 
realised as that of a homogeneous space although 
it is unclear if this has the same homotopy type. 

\bigskip
\noindent
\textbf{Some history:} The author was led 
to this example by serendipity. While reading 
about Poincar\'e duality in Larry Smith's 
book \cite{LS:PolyInvtsFinGps-Book}, he 
looked for familiar examples and the 
natural symmetry of the Joker module 
suggested it might be part of one and
this fitted well with earlier work. The 
difficulty of passing from a $\PL$ to a 
smooth structure was overcome when Csaba 
Nagy pointed out some old calculations 
of Borel and Hirzebruch. We have decided 
to make this work available since it 
seems to provide an interesting example 
of surgery theory in action building on 
some known homotopy theory.

\bigskip
\noindent
\textbf{Notation, etc:} We will denote 
the unreduced cohomology of a space $X$
by $H^*(X)$, and also write $H^*(Y)$
for the cohomology of a spectrum. These 
notations are of course inconsistent:
for a space with disjoint basepoint
$X_+$, 
\[
H^*(X) \iso H^*(\Sigma^\infty(X_+)),
\]
however the context is usually sufficient
to make clear what is meant in practise.

\section{Poincar\'e duality algebras over 
the Steenrod algebra}\label{sec:SPD}

This section is based on work of Adams and 
Smith~\cites{JFA:FormulaeThom&Wu,LS:MilnorDiagElt}.

In this section we give some algebraic 
results on Poincar\'e duality algebras 
over the mod~$2$ Steenrod algebra~$\StA$. 
We will refer to such an algebra as a 
\emph{Steenrod Poincar\'e duality algebra} 
(SPDA). Although we work exclusively in   
characteristic~$2$, of course similar 
results hold for other primes.

We recall that for a left $\StA$-module 
$M=M^*$ its dual~$M_*$ (with grading 
defined by $M_n=\Hom_{\F_2}(M^n,\F_2)$ 
which we view as cohomological degree 
$-n$) becomes a right $\StA$-module 
so that for $x\in M$,
\[
(f\cdot\Sq^r)(x) = f(\Sq^rx).
\]
This means that the duality pairing 
$M_*\otimes M^*\to\F_2$ factors through
$M_*\otimes_{\StA}M^*$. Using the antipode 
$\chi$ of $\StA$ can also view~$M_*$ as 
a left $\StA$-module by setting
\[
\Sq^r\cdot f = f\cdot(\chi\Sq^r).
\]

If $P^*$ is an algebra then there is 
a pairing 
\[
\cap\:P_*\otimes P^*\to P_*;
\quad
f\cap a = f(a(-))
\]
making $P_*$ a right $P^*$-module and 
also a left $P^*$-module since $P^*$ is 
commutative. Taking the diagonal left 
$\StA$-module structure this is an 
$\StA$-module homomorphism.

One way to handle the interaction of 
the $P$ and $\StA$ module structures 
is by introducing the cross product 
algebra $P\sharp\StA$ which is the 
vector space $P\otimes\StA$ equipped 
with the multiplication that is given 
on basic tensors by
\[
(a\otimes\alpha)(b\otimes\beta)
= \sum_i a(\alpha'_i b)\otimes\alpha''_i\beta,
\]
where the coproduct on $\alpha$ is 
$\sum_i\alpha'_i\otimes\alpha''_i$.
We usually denote the element 
$a\otimes\alpha$ by $a\alpha$ so the 
above formula becomes
\begin{equation}\label{eq:CrosProd}
(a\alpha)(b\beta) 
= \sum_i a(\alpha'_i b)\alpha''_i\beta,
\end{equation}
It is also useful to record the easily 
proved formula 
\begin{equation}\label{eq:CrosProd-reverse}
a\alpha = \sum_i (1\alpha'_i)(\chi\alpha''_ia). 
\end{equation}
Then $P^*$ and $P_*$ are left $P\sharp\StA$-modules 
where for $x\in P^*$ and $f\in P_*$,
\[
(a\alpha)\cdot x = a\alpha(x),
\quad
(a\alpha)\cdot f = a\cdot(\alpha\cdot f).
\]

\begin{defn}\label{defn:SPDA}
A \emph{Steenrod Poincar\'e duality algebra 
of degree~$d$} is a finite type graded 
connected unstable commutative $\StA$-algebra 
$P=P^*$ with the following properties.
\begin{itemize}
\item
$P$ is a Poincar\'e duality algebra of 
degree~$d$: there is an element $[P]\in P_d$ 
such that $[P]\cap\:P^*\to P_{d-*}$ is 
an isomorphism of $P^*$-modules.
\item 
$[P]\cap$ is an isomorphism of left 
$P\sharp\StA$-modules.
\end{itemize}
\end{defn}

Since $[P]\cap$ is an isomorphism of 
$P^*$-modules, it follows that $P_*$ is 
a free $P^*$-module with basis~$[P]$.

Given such an SPDA~$P$ we can define 
some useful sequences of elements. 
\begin{defn}\label{defn:Wu&SW} 
The following sequences in $P^*$ are 
defined.
\begin{itemize}
\item 
The \emph{Wu classes}: $v_k\in P^k$ is 
the unique element for which 
\[
(\chi\Sq^k)\cdot[P]=[P]\cdot\Sq^k = [P]\cap v_k.
\]
\item 
The \emph{Stiefel-Whitney classes}:  
$w_k\in P^k$ is the unique element for 
which 
\[
w_k = \sum_{0\leq i\leq k}\Sq^iv_{k-i}.
\]
\item 
The \emph{dual Stiefel-Whitney classes}:  
$\bar{w}_k\in P^k$ is the unique element 
for which 
\[
\Sq^k\cdot[P]=[P]\cdot\chi\Sq^k = [P]\cap\bar{w}_k.
\]
\end{itemize}
We will refer to these elements as the
\emph{characteristic classes\/} of the 
SPDA. 
\end{defn}
Clearly we have $v_0=w_0=\bar{w}_0=1$.
\begin{prop}\label{prop:CharClasses}
The characteristic classes of the SPDA 
$P^*$ have the following properties. \\
\emph{(a)} 
If $k>\lfloor d/2\rfloor$ then $v_k=0$. \\
\emph{(b)} 
If $d$ is even then $w_d=v_{d/2}^2$.  \\
\emph{(c)}
For $1\leq k\leq d$ the Stiefel-Whitney 
classes satisfy
\[
\sum_{0\leq i\leq k}w_i\bar{w}_{k-i} = 0.
\]
\end{prop}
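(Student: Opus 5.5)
The plan is to translate each statement into a statement about evaluating $[P]$ on products, using the defining identity $([P]\cdot\theta)(x)=[P](\theta x)$ for the right action, together with the nondegeneracy of $[P]\cap$. The key reformulation is that $v_k$ is characterised by
\[
[P](v_k x)=[P](\Sq^k x)\qquad\text{for all } x\in P^{d-k}.
\]
Similarly, $\bar w_k$ is characterised by $[P](\bar w_k x)=[P](\chi\Sq^k x)$ for all $x\in P^{d-k}$. Uniqueness of all these classes follows because $[P]\cap$ is an isomorphism $P^k\to P_{d-k}$.

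\emph{(a)} Take $x\in P^{d-k}$. If $k>\lfloor d/2\rfloor$ then $k>d-k=|x|$, so $\Sq^kx=0$ by instability. Hence $[P](v_kx)=0$ for all such $x$, and Poincar\'e duality gives $v_k=0$.

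\emph{(b)} By definition $w_d=\sum_i\Sq^iv_{d-i}$. By (a), $v_{d-i}=0$ unless $d-i\leq d/2$, that is, $i\geq d/2$. By instability, $\Sq^iv_{d-i}=0$ if $i>d-i$, that is, $i>d/2$. So the only surviving term is $i=d/2$, giving $w_d=\Sq^{d/2}v_{d/2}=v_{d/2}^2$.

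\emph{(c)} This is the main step. Write the total classes as $w=\sum w_i$, $\bar w=\sum\bar w_i$, $v=\sum v_i$, and the total square $\Sq=\sum\Sq^i$, which is multiplicative by the Cartan formula. The definition of $w_k$ says $w=\Sq(v)$. I want to show $w\bar w=1$. I would test against $[P]$: for any $x$, compute $[P](w\bar w x)$ and show it equals $[P](x)$, that is, it picks out only the degree-$d$ component of $x$.

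Using the $\bar w$ identity,
\[
[P](\bar w\,y)=[P](\chi\Sq\, y),
\]
where $\chi\Sq=\sum\chi\Sq^i$ is the inverse of $\Sq$ in the completed algebra, since $\sum\chi(\Sq^i)\Sq^{k-i}=0$ for $k>0$. Also $\chi\Sq$ is multiplicative, because $\chi$ is an anti-automorphism and the algebra is commutative. Taking $y=\Sq(v)x$ gives
\[
[P](w\bar w x)=[P]\bigl(\chi\Sq(\Sq(v))\,\chi\Sq(x)\bigr)=[P]\bigl(v\,\chi\Sq(x)\bigr).
\]
Applying the $v$ identity in total form, $[P](v\,z)=[P](\Sq z)$, yields
\[
[P](\Sq\,\chi\Sq\,x)=[P](x).
\]
Hence $[P]\cap(w\bar w-1)=0$, and by duality $w\bar w=1$. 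The degree-$k$ part of this identity is exactly the claim in (c).

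The obstacle is bookkeeping with the total operations: checking that $\chi\Sq$ is multiplicative, that $\chi\Sq\circ\Sq=\mathrm{id}$ on $P$ (finitely many terms in each degree, so there is no convergence issue), and that the total-form identities $[P](vz)=[P](\Sq z)$ and $[P](\bar w y)=[P](\chi\Sq y)$ hold. The latter two follow degreewise from the definitions, since only the component of the right degree pairs nontrivially with $[P]$.
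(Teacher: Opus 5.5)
Your proof is correct. Parts (a) and (b) match the paper's; in (a) you simply make explicit the pairing argument behind the paper's one-line appeal to instability.

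For (c) you take the dual route. The paper stays in $P_*$, viewed as a left $P\sharp\StA$-module, and works degree by degree. It uses the cross-product relation $(1\Sq^{n})(a1)=\sum_j(\Sq^ja)\Sq^{n-j}$, which is the Cartan formula for the $\chi$-twisted action on $P_*$, to rewrite $[P]\cap\sum_i w_i\bar{w}_{k-i}$ as $\bigl(\sum_r\Sq^{k-r}\chi\Sq^r\bigr)\cdot[P]=0$. You instead stay in $P$, pair against $[P]$, and use that the total operations $\Sq$ and $\chi\Sq$ are mutually inverse ring automorphisms of $P$.

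Evaluating the paper's final expression on $x\in P^{d-k}$ gives exactly your $[P]\bigl(\sum_r\Sq^r\chi\Sq^{k-r}x\bigr)$, so the two computations are transposes of one another. Yours avoids the cross-product formalism and packages (c) as the single identity $w\bar{w}=1$. The paper's keeps everything inside the $P\sharp\StA$-module language it reuses later for the Thom module $H^*(M\nu)$.

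Two small bookkeeping corrections:
\begin{itemize}
\item Multiplicativity of $\chi\Sq$ does not come from $\chi$ being an algebra anti-automorphism. It comes from $\chi$ respecting the coproduct, or more simply from $\Sq$ being a unipotent ring automorphism of $P$ by the Cartan formula, so that its inverse is one too.
\item Your final step uses $\Sq\circ\chi\Sq=\mathrm{id}$, the other antipode identity, not the one you quoted. This is fine, since a left inverse of a unipotent element of the completed algebra is two-sided.
\end{itemize}
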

\begin{proof}
(a) This follows from the unstable algebra
condition. \\
(b) By definition and the unstable algebra 
condition,
\[
w_d = \sum_{0\leq i\leq d/2}\Sq^{d-i}v_i 
= \Sq^{d/2}v_{d/2} = v_{d/2}^2.
\]
(c) For $1\leq k\leq d$, 
\begin{align*}
\sum_{0\leq i\leq k}w_i\bar{w}_{k-i}\cdot[P] 
&= \sum_{0\leq i\leq k}w_i\Sq^{k-i}\cdot[P]  \\ 
&= \sum_{\substack{0\leq i\leq k\\ 0\leq j\leq i}}(\Sq^jv_{i-j})\Sq^{k-i}\cdot[P]  \\ 
&= \sum_{\substack{0\leq r\leq k\\ 0\leq j\leq k-r}}(\Sq^jv_r)\Sq^{k-r-j}\cdot[P]  \\ 
&= \sum_{\substack{0\leq r\leq k\\ 0\leq j\leq k-r}}(\Sq^jv_r)\Sq^{k-r-j}\cdot[P]  \\ 
&= \sum_{0\leq r\leq k}\Sq^{k-r}\cdot(v_r\cdot[P])  \\ 
&= \sum_{0\leq r\leq k}\Sq^{k-r}\cdot(\chi\Sq^r\cdot[P])  \\
&= \bigl(\sum_{0\leq r\leq k}\Sq^{k-r}\chi\Sq^r\bigr)\cdot[P] \\
&=0.
\qedhere
\end{align*}
\end{proof}

\begin{rem}\label{rem:WuCalc}
In general, calculation of the universal 
Wu classes for a manifold in terms of the 
universal Stiefel-Whitney classes can be 
non-trivial because of the involvement 
of the Steenrod algebra antipode. An 
alternative approach due to Atiyah \& 
Hirzebruch~\cite{MFA&FH:CohomOpCharKlass} 
involves the Todd polynomials; see Hirzebruch 
et al~\cite{HBJ:MfdsModForms}*{section~8.1}.
\end{rem}

We record another useful result.
\begin{prop}\label{prop:PDA-map}
Suppose that $P,Q$ are two Poincar\'e duality 
algebras of degree~$d$ and that $h\:P\to Q$ 
is an algebra homomorphism restricting to 
an isomorphism $h\:P^d\xrightarrow{\iso}Q^d$.
Then $h$ is injective.
\end{prop}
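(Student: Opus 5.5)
The plan is the standard nondegeneracy argument, using only the Poincaré duality pairing. I take a Poincaré duality algebra of degree~$d$ to mean that $P^d$ is $1$-dimensional and that the multiplication pairing $P^k\otimes P^{d-k}\to P^d\iso\F_2$ is nondegenerate for every~$k$. This is what the isomorphism $[P]\cap\:P^*\to P_{d-*}$ of Definition~\ref{defn:SPDA} says once it is written out. Since $(f\cap a)(b)=f(ab)$, the element $[P]\cap a$ vanishes exactly when $[P](ab)=0$ for all~$b$. Here $[P]$ is the functional detecting the generator of $P^d$. The same description holds for~$Q$. Since $h$ is an algebra homomorphism it preserves degrees, so it suffices to show that $h\:P^k\to Q^k$ is injective for each~$k$.

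First step: fix $k$ and a nonzero element $a\in P^k$. By Poincaré duality in~$P$ there is some $b\in P^{d-k}$ with $ab\neq0$ in $P^d$. Equivalently, $[P](ab)\neq0$, since $[P]\cap$ is injective.

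Second step: apply~$h$. Multiplicativity gives
\[
h(a)h(b)=h(ab).
\]
The element $ab$ is a nonzero element of $P^d$, and by hypothesis $h$ restricts to an isomorphism $P^d\to Q^d$. Hence $h(ab)\neq0$, so $h(a)h(b)\neq0$, which forces $h(a)\neq0$. Thus $\ker h\cap P^k=0$ for every~$k$. In degree $0$ the argument also works, with $b$ a top class; alternatively, injectivity is clear there because $h(1)=1$ and $P^0=\F_2$ by connectivity.

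I do not expect a real obstacle. The only point needing care is the translation between the cap-product formulation in the paper's definition and the nondegeneracy of the cup-product pairing. For the plain Poincaré duality algebras in the statement, only the first bullet of Definition~\ref{defn:SPDA} is used, and $Q$ need not satisfy anything beyond $Q^d$ being $1$-dimensional, which follows from $h$ being an isomorphism in degree~$d$. No Steenrod structure enters.
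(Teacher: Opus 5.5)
Your proof is correct and is essentially the paper's argument. The paper normalises $h_*[Q]=[Q]\circ h=[P]$ and notes that $h(x)=0$ makes $[P]\cap x$, the functional $y\mapsto[Q](h(x)h(y))$, vanish, so $x=0$ by freeness of $P_*$ on $[P]$; you run the same nondegeneracy argument in contrapositive form via the cup-product pairing.
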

\begin{proof}
The adjoint map $h_*\:Q_*\to P_*$ gives an 
isomorphism $h_*\:Q_d\to P_d$ so we might 
as well assume that $h_*[Q]=[P]$ where 
$h_*[Q]=[Q]\circ h$. If $h\in P$ satisfies 
$h(x)=0$ then  
\[
[P]\cap x = [Q]\circ h(x) = 0.
\]
By freeness of the $P$-module $P_*$, this 
gives~$x=0$.
\end{proof}

\section{Some Poincar\'e duality complexes}\label{sec:PDcplxs}

In \cites{AB:Joker,AB&TB:Joker} we showed 
that the classical Joker $\StA(1)$-module 
and two of its iterated doubles could be 
realised as spaces and spectra. For example,
there is a CW complex with the following 
mod~$2$ cohomology as a module over~$\StA$,
where we show all the non-trivial actions
of $\Sq^{2^s}$ that occur and each $\bullet$ 
denotes a copy of~$\F_2$.

\begin{equation}\label{eq:H^*J}
\begin{tikzpicture}[scale=0.8]
\Vertex[y=6,size=.05,color=black]{A6}
\Text[x=-1,y=6,position=left,distance=1mm]{\tiny$6$}
\Vertex[y=5,size=.05,color=black]{A5}
\Vertex[y=4,size=.05,color=black]{A4}
\Text[x=-1,y=4,position=left,distance=1mm]{\tiny$4$}
\Vertex[y=3,size=.05,color=black]{A3}

\Vertex[y=2,size=.05,color=black]{A2}
\Text[x=-1,y=2,position=left,distance=1mm]{\tiny$2$}

\Edge[lw=0.75pt,bend=45](A4)(A6)
\Edge[lw=0.75pt](A5)(A6)
\Edge[lw=0.75pt,bend=-45](A3)(A5)
\Edge[lw=0.75pt,bend=45,label={$\Sq^2$},position=left](A2)(A4)
\Edge[lw=0.75pt,label={$\Sq^1$},position=right](A2)(A3)
\end{tikzpicture}
\end{equation}
The suspension spectrum of this complex is 
unique up to homotopy at least $2$-locally.

We will show that an additional $8$-cell can 
be added to get a CW complex whose cohomology
as an algebra over the Steenrod algebra is 
shown below, where the generator~$u_i$ has 
in degree~$i$. Here the $0$-cell is of course 
a disjoint basepoint.
\bigskip
\begin{equation}\label{eq:H^*J8}
\begin{tikzpicture}[scale=0.8]
\Vertex[y=8,size=.05,color=black]{A8}
\Text[x=-2,y=8,position=left,distance=1mm]{\tiny$8$}
\Text[y=8,position=right,distance=2mm]{\small$u_2^4$}
\Vertex[y=6,size=.05,color=black]{A6}
\Text[x=-2,y=6,position=left,distance=1mm]{\tiny$6$}
\Text[y=6,position=right,distance=1mm]{\small$u_2^3=u_3^2$}
\Vertex[y=5,size=.05,color=black]{A5}
\Text[y=5,position=right,distance=1mm]{\small$u_2u_3$}
\Vertex[y=4,size=.05,color=black]{A4}
\Text[x=-2,y=4,position=left,distance=1mm]{\tiny$4$}
\Text[y=4,position=right,distance=4mm]{\small$u_2^2$}
\Vertex[y=3,size=.05,color=black]{A3}

\Text[y=3,position=right,distance=1mm]{\small$u_3$}
\Vertex[y=2,size=.05,color=black]{A2}
\Text[x=-2,y=2,position=left,distance=1mm]{\tiny$2$}
\Text[y=2,position=right,distance=1mm]{\small$u_2$}
\Vertex[y=0,size=.05,color=black]{A0}
\Text[x=-2,y=0,position=left,distance=1mm]{\tiny$0$}
\Text[y=0,position=right,distance=1mm]{\small$1$}

\Edge[lw=0.75pt](A5)(A6)
\Edge[lw=0.75pt,bend=45](A4)(A6)
\Edge[lw=0.75pt,bend=45,label={$\Sq^4$},position=left](A4)(A8)
\Edge[lw=0.75pt,bend=-45](A3)(A5)
\Edge[lw=0.75pt,bend=45,label={$\Sq^2$},position=left](A2)(A4)
\Edge[lw=0.75pt,label={$\Sq^1$},position=right](A2)(A3)
\end{tikzpicture}
\end{equation}

The suspension spectrum $J^8$ is 
well-defined $2$-locally but unstably 
it may not be. 

\begin{thm}\label{thm:JokerPD}
There is a simply connected Poincar\'e 
duality complex~$J^8$ which satisfies 
the following conditions.
\begin{itemize}
\item 
As an unstable algebra over the mod~$2$
Steenrod algebra,
\[
H^*(J^8;\F_2) = 
\F_2[u_2,u_3]/(u_2^3+u_3^2,u_2^2u_3) 
\]
with $\Sq^1u_2=u_3$ and all other Steenrod
square actions implied by the unstable 
condition.
\item 
As an $\Z[1/2]$-algebra,
\[
H^*(J^8;\Z[1/2]) = \Z[1/2][U_4]/(U_4^3).
\] 
\end{itemize}
\end{thm}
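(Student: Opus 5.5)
We will build $J^8$ by attaching one $8$-cell to a simply connected CW complex $J$ realising the Joker pictured in~\eqref{eq:H^*J}. For $J$ we take the complex constructed in~\cites{AB:Joker,AB&TB:Joker}, with cells in dimensions $2,3,4,5,6$ and $\Sq^1$ linking the $2$- and $3$-cells. We then choose the attaching map $\phi\colon S^7\to J$ so that in $J^8=J\cup_\phi e^8$ the cup square $u_2^2\smile u_2^2$ (equivalently $\Sq^4 u_2^2$) is the generator of $H^8$.

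\textbf{Integral and odd-primary structure.} Cellularly, $J$ has integral cohomology $H^2=0$, $H^3=\Z/2$ (from the $\Sq^1$ on $u_2$), $H^4=\Z$, $H^5=0$, $H^6=\Z$, up to the choices made in~\cites{AB:Joker}. Away from $2$ we therefore have $J[1/2]\simeq (S^4\cup e^6)[1/2]$. The attaching map of the $6$-cell lies in $\pi_5(S^4)\otimes\Z[1/2]=0$, so $J[1/2]\simeq S^4\vee S^6$ after inverting~$2$.

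We choose $\phi$ so that its image in $\pi_7(J[1/2])$ is a combination involving the Whitehead product $[\iota_4,\iota_4]$ together with a class mapping to $\pi_7(S^6)$ (which is torsion), arranged so that the Hopf invariant gives $U_4^2=\pm\,(\text{generator of }H^6\text{-dual})$. More precisely, we want $U_4^2=U_6$ and $U_4U_6=U_8$ up to units. This realises $\Z[1/2][U_4]/(U_4^3)$, and the mod~$2$ products $u_2^3=u_3^2$ and $u_2^2u_3=0$ follow by degree and Bockstein reasoning.

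\textbf{The cup product structure.} A cleaner route is to construct the space as the $8$-skeleton-plus-one-cell of a known Poincaré-type object. We would compare with a homogeneous space, or a space like $\mathbb{H}P^2$ twisted: pull back along a map $J\to\mathbb{H}P^\infty$ realising $u_2^2\mapsto$ the generator $\bmod 2$. We can then take $\phi$ to be the lift of the attaching map of the $8$-cell of $\mathbb{H}P^2$ through the fibre of $J^{(6)}\to \mathbb{H}P^\infty$. Obstruction theory for extending over the cell is where the main difficulty lies: the obstructions lie in $\pi_7$ of the relevant fibre, and we must check that they vanish, or can be killed by changing $\phi$ by elements of $\pi_7(J)$ that do not alter the cohomology.

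\textbf{Poincaré duality.} Once $J^8$ is built with the stated cohomology rings, we verify Poincaré duality.
\begin{itemize}
\item \emph{Mod~$2$:} the pairings $H^k\otimes H^{8-k}\to H^8$ are perfect. The pairs are $u_2$ with $u_2^3$, $u_3$ with $u_2u_3$ (we must show $u_2u_3\cdot u_3=u_2u_3^2=u_2^4\neq0$), and $u_2^2$ with itself.
\item \emph{With $\Z[1/2]$-coefficients:} duality is the truncated polynomial ring statement.
\end{itemize}
Since duality holds rationally, mod~$2$ and after inverting~$2$, with simple connectivity and no further torsion, cap product with the fundamental class is an isomorphism integrally by a Bockstein / universal-coefficient argument. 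Hence $J^8$ is a Poincaré duality complex.

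The main obstacle is exhibiting $\phi\in\pi_7(J)$ with $\Sq^4u_2^2\ne0$ in the cone. We expect to compute $\pi_7(J)$ in a range via the EHP or Atiyah–Hirzebruch approach, and to show that the $2$-primary Hopf-invariant-one requirement is met by a class that restricts on the bottom $S^4$-like piece to $\nu$ composed appropriately.
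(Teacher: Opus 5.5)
\textbf{There is a genuine gap.} It sits exactly where the content of the theorem lies: you never produce an attaching map $\phi\colon S^7\to J$ for which $u_2^4=\Sq^4(u_2^2)$ is nonzero in the cone, and the mechanism you suggest for it cannot work.

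\emph{Why the $\mathbb{H}P^\infty$ route fails.} There is no map $g\colon J\to\mathbb{H}P^\infty$ with $g^*x=u_2^2$ for the generator $x\in H^4(\mathbb{H}P^\infty;\F_2)$. Naturality would give $\Sq^2(u_2^2)=g^*\Sq^2x=0$, since $H^6(\mathbb{H}P^\infty;\F_2)=0$. But in the Joker $\Sq^2(u_2^2)=(\Sq^1u_2)^2=u_3^2\neq0$. So $\mathbb{H}P^2$ is a model only away from $2$, and at the prime $2$ there is nothing to lift. For the same reason your hint about $\nu$ on a ``bottom $S^4$'' has no content at $2$: $u_2^2=\Sq^2u_2$ restricts to zero along any map $S^4\to J$.

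\emph{The missing idea, which is how the paper proceeds.} Replace $\mathbb{H}P^\infty=B\SU(2)$ by $B\SO(3)$. There $w_2^2$ is the mod~$2$ reduction of $p_1$, and $\Sq^2(w_2^2)=w_3^2$ as required. The classes $w_2^3+w_3^2$ and $\Sq^1(w_2^3+w_3^2)=w_2^2w_3$ generate an $\StA$-invariant ideal of $\F_2[w_2,w_3]$ containing $w_2^5$. The paper kills $w_2^3+w_3^2$ by passing to the homotopy fibre of $B\SO(3)\to K(\F_2,6)$, and extracts $J^8$ from a minimal cell structure on that fibre. Since $J^8$ then maps to $B\SO(3)$ with $w_2,w_3\mapsto u_2,u_3$, every product and square is inherited by naturality, in particular $u_2u_3\neq0$ and $u_2^4\neq0$. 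So no computation of $\pi_7(J)$ is needed. Because the fibration is trivial after inverting $2$, the $\Z[1/2]$-cohomology comes from $H^*(B\SO(3);\Z[1/2])=\Z[1/2][p_1]$.

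\textbf{Errors in the odd-primary paragraph.}
\begin{itemize}
\item The integral groups you list are inconsistent with the Joker. With $H^5(J;\Z)=0$, universal coefficients and $H^5(J;\F_2)\neq0$ force $H^6(J;\Z)$ to be finite rather than $\Z$; it is $\Z/2$ for the minimal model, as $\Sq^1u_5=u_6$ records. Hence $J[1/2]\simeq S^4[1/2]$, not $S^4\vee S^6$.
\item $U_4^2$ lives in degree $8$, so there is no class $U_6$ to arrange. The requirement away from $2$ is just that $\phi$ have Hopf invariant a unit in $\Z[1/2]$, giving $J^8[1/2]\simeq\mathbb{H}P^2[1/2]$.
\end{itemize}

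\textbf{What does work.} Once the space exists, your derivation of the mod~$2$ relations from $u_2^4\neq0$ is fine. So is your final duality argument, that mod~$2$ and $\Z[1/2]$ duality together give integral duality.
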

\begin{proof}
The basic idea is similar to the proof 
of theorem~\cite{AB:Joker}*{theorem~5.1}
and we merely sketch the details.

We start with $H^*(B\SO(3))=\F_2[w_2,w_3]$
and notice that the standard formula for 
Steenrod operations on Stiefl-Whitney classes 
$\Sq^1w_2=w_3$ and $\Sq^2w_3=w_2w_3$ imply
\[
\Sq^1(w_2^3+w_3^2)=w_2^2w_3,
\quad
\Sq^2(w_2^3+w_3^2)=w_2(w_2^3+w_3^2),
\quad
\Sq^4(w_2^3+w_3^2)=w_2^5
\]
hence $w_2^3+w_3^2$ and $w_2^2w_3$ generate 
an $\StA$-invariant ideal containing 
\[
w_2^5=w_2^2(w_2^3+w_3^2)+w_3(w_2^2w_3).
\] 

To construct this complex, first take 
a fibration $B\SO(3)\to K(\F_2,6)$ 
representing the cohomology class
$w_2^3+w_3^2\in H^6(B\SO(3))$. Then 
replace its fibre by a minimal CW 
complex and take the $8$-skeleton 
to obtain~$J^8$. 

Notice that $H^*(J^8;\F_2)$ is visibly 
a PD algebra and so is 
\[
H^*(J^8;\Z_{(2)}) = \Z_{(2)}[U_4,U_3]/(U_4^3,2U_3,U_4U_3,U_3^3).
\]
We leave the remaining details to 
the reader.
\end{proof}

The Spanier-Whitehead dual of $J^8_+$
is the Thom spectrum of the (stable)
Spivak normal bundle~$\nu$. So as left 
$\StA$-modules
\[
H^*(M\nu) \iso H_*(J^8)[8],
\]
i.e., $H_*(J^8)$ with a degree shift 
of~$8$. In fact with the usual Thom 
$H^*(J^8)$-module structure this is 
even an isomorphism of 
$H^*(J^8)\sharp\StA$-modules.

The dual Stiefel-Whitney classes here
are of course the Stiefel-Whitney 
classes of $\nu$ since for the Thom 
class $u\in H^0(M\nu)$ corresponding 
to the fundamental class $[J^8]\in H_8(J^8)$
(which in turn agrees with $[H^*(J^8)]$),
\[
\Sq^ru = \bar{w}_ru.
\]
The Stiefel-Whitney classes themselves 
are those of a putative tangent bundle 
of~$J^8$.

Here the Wu classes are zero except 
$v_4=u_2^2$. Therefore the non-zero
Stiefel-Whitney classes are
\begin{align*}
w_4 &= v_4 = u_2^2, \\
w_6 &= \Sq^2v_4 = u_3^2=u_2^3, \\
w_8 &= \Sq^4v_4 = u_2^4.
\end{align*}
By Proposition~\ref{prop:CharClasses}(c)
we have 
\begin{align*}
\bar{w}_4 &= u_2^2, \\
\bar{w}_6 &= u_2^3, \\
\bar{w}_8 &= 0.
\end{align*}
This leads to the following picture 
of the $H^*(J^8)\sharp\StA$-module
$H^*(M\nu)$.

\bigskip
\begin{center}
\begin{tikzpicture}[scale=0.8]
\Vertex[y=8,size=.05,color=black]{A8}
\Text[x=-2,y=8,position=left,distance=1mm]{\tiny$8$}
\Text[y=8,position=right,distance=2mm]{\small$u_2^4u$}
\Vertex[y=6,size=.05,color=black]{A6}
\Text[x=-2,y=6,position=left,distance=1mm]{\tiny$6$}
\Text[y=6,position=right,distance=1mm]{\small$u_2^3u=u_3^2u$}
\Vertex[y=5,size=.05,color=black]{A5}
\Text[y=5,position=right,distance=1mm]{\small$u_2u_3u$}
\Vertex[y=4,size=.05,color=black]{A4}
\Text[x=-2,y=4,position=left,distance=1mm]{\tiny$4$}
\Text[y=4,position=right,distance=4mm]{\small$u_2^2u$}
\Vertex[y=3,size=.05,color=black]{A3}

\Text[y=3,position=right,distance=1mm]{\small$u_3u$}
\Vertex[y=2,size=.05,color=black]{A2}
\Text[x=-2,y=2,position=left,distance=1mm]{\tiny$2$}
\Text[y=2,position=right,distance=1mm]{\small$u_2u$}
\Vertex[y=0,size=.05,color=black]{A0}
\Text[x=-2,y=0,position=left,distance=1mm]{\tiny$0$}
\Text[y=0,position=right,distance=1mm]{\small$u$}

\Edge[lw=0.75pt](A5)(A6)
\Edge[lw=0.75pt,bend=45,label={$\Sq^2$},position=left](A4)(A6)
\Edge[lw=0.75pt,bend=-45](A3)(A5)
\Edge[lw=0.75pt,bend=45](A2)(A4)
\Edge[lw=0.75pt,label={$\Sq^1$},position=right](A2)(A3)
\Edge[lw=0.75pt,bend=45,label={$\Sq^4$},position=left](A0)(A4)
\end{tikzpicture}
\end{center}

\section{Obstruction theory for $J^8$}
\label{sec:J^8-ObstrThy}

We begin by recalling results of Madsen 
\& Milgram~\cite{IM&RJM:Book}*{theorem~7.1}
giving the $2$-local homotopy type of 
the classifying spaces for surgery. For 
non-experts we recommend the survey 
article of Klein~\cite{JRK:PDSpaces} 
for historical and mathematical 
background.
\begin{thm}\label{thm:M&Mthm7.1}
There are equivalences 
\begin{align*}
B(\G/\TOP)_{(2)} 
&\sim 
\prod_{j\geq1} K(\mathbb{Z}_{(2)},4j+1)\times K(\mathbb{F}_{(2)},4j-1), \\
B(\G/\PL)_{(2)} 
&\sim
E_3\times 
\prod_{j\geq2} K(\mathbb{Z}_{(2)},4j+1)\times K(\mathbb{F}_{(2)},4j-1),
\end{align*}
where $E_3$ is a $2$-stage Postnikov system 
with $k$-invariant 
$\beta\Sq^2\in H^6(K(\mathbb{F}_{(2)};3),\mathbb{Z})$.
\end{thm}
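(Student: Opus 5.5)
This is a cited theorem of Madsen \& Milgram, so the plan is to reconstruct its proof from the classical structure of the surgery spaces. We start from Sullivan's analysis of $\G/\PL$ and $\G/\TOP$.

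\emph{Homotopy groups.} Surgery on simply connected manifolds gives
\[
\pi_n(\G/\TOP)\iso L_n(1)=\Z,0,\Z/2,0
\]
for $n\equiv0,1,2,3\bmod 4$. The map $\G/\PL\to\G/\TOP$ is an isomorphism on $\pi_n$ except in degree~$4$, where it is multiplication by~$2$. This comes from the fibration $\TOP/\PL=K(\Z/2,3)\to\G/\PL\to\G/\TOP$ together with Rochlin's theorem.

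\emph{Splitting $B(\G/\TOP)$.} Delooping once shifts these groups up by one: $\Z_{(2)}$ in degree $4j+1$ and $\Z/2$ in degree $4j-1$. To show that $B(\G/\TOP)_{(2)}$ is a product of Eilenberg--Mac~Lane spaces, we produce cohomology classes detecting every homotopy group simultaneously. Sullivan's $2$-local splitting $\G/\TOP_{(2)}\sim\prod K(\Z_{(2)},4j)\times K(\Z/2,4j-2)$ is given by the surgery (Kervaire and $L$-genus) classes. Madsen--Milgram check that these classes are primitive with respect to the Whitney-sum (equivalently, characteristic-variety) $H$-space structure. Primitive classes in $H^*(\G/\TOP)$ that come from loop maps deloop, giving classes $K_{4j-1}\in H^{4j-1}(B(\G/\TOP);\Z/2)$ and $L_{4j+1}\in H^{4j+1}(B(\G/\TOP);\Z_{(2)})$. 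The product map to the Eilenberg--Mac~Lane spaces is then a $\pi_*$-isomorphism after $2$-localisation, and hence an equivalence.

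The main obstacle is exactly this deloopability. One has to show that Sullivan's classes are infinite loop (or at least once-deloopable) with respect to the Whitney sum structure, rather than the composition structure. Madsen--Milgram handle this by an explicit analysis of $H^*(\G/\TOP)$ as a Hopf algebra and of the Dyer--Lashof operations coming from $\QS0$. I would follow their route: compute the primitives and check that the $k$-invariants, which are stable operations, must vanish for degree reasons except in the lowest range.

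\emph{The $\PL$ case.} In degrees $\geq8$ the two spaces agree via the fibration $B(\TOP/\PL)=K(\Z/2,4)\to B(\G/\PL)\to B(\G/\TOP)$. Below that, $B(\G/\PL)$ has $\pi_3=\Z/2$ and $\pi_5=\Z_{(2)}$. The remaining task is to identify the only possible $k$-invariant, which lives in $H^6(K(\Z/2,3);\Z_{(2)})=\Z/2$ and is generated by $\beta\Sq^2$. We show it is nonzero by recalling Sullivan's result that the first $k$-invariant of $\G/\PL$ is $\delta\Sq^2$, which is nontrivial because of Rochlin's theorem, and then delooping it. Splitting off the higher factors uses the same classes as in the $\TOP$ case, pulled back along $B(\G/\PL)\to B(\G/\TOP)$.
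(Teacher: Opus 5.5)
The paper gives no proof of this statement; it is quoted from Madsen--Milgram. So the question is whether your reconstruction stands on its own. Several parts do:
\begin{itemize}
\item the homotopy groups;
\item the fibration $K(\Z/2,4)\to B(\G/\PL)\to B(\G/\TOP)$;
\item $H^6(K(\Z/2,3);\Z_{(2)})\iso\Z/2$;
\item the argument that the $k$-invariant of $B(\G/\PL)$ must loop to Sullivan's $\delta\Sq^2\iota_2\neq0$, and hence equals $\beta\Sq^2\iota_3$;
\item the deduction of the $\PL$ case from the $\TOP$ case.
\end{itemize}
But the $\TOP$ splitting, which is the real content of the theorem, is not established by what you wrote.

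There are two concrete problems.

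\emph{The wrong $H$-structure.} The delooping that matters here is the one fitting into $B\TOP\to B\G\to B(\G/\TOP)$, which is how the paper uses it. It is taken with respect to the Whitney sum, and that is \emph{not} equivalent to the characteristic-variety $H$-structure. Sullivan's classes are primitive only for the latter. Rationally $1+8\ell$ is the $L$-genus of the underlying bundle (up to inversion), so it is multiplicative under Whitney sum. For example, $\mu^*\ell_8=\ell_8\otimes1+1\otimes\ell_8+8\,\ell_4\otimes\ell_4$. So you cannot start by delooping Sullivan's splitting classes.

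\emph{The delooping step.} Even with Whitney-primitive classes in hand, primitivity does not suffice for delooping. Your proposed finish fails: the claim that the $k$-invariants of $B(\G/\TOP)$ are stable operations which vanish for degree reasons is false. Degree considerations leave nonzero candidates, and some of these are invisible on $\G/\TOP$. For instance, the $k$-invariant attaching $\pi_7=\Z/2$ to $K(\Z/2,3)\times K(\Z_{(2)},5)$ could a priori be $\Sq^4\Sq^1\iota_3=(\Sq^1\iota_3)^2\neq0$. This class is primitive, and indeed is a stable operation. However, it loops to $\Sq^4\Sq^1\iota_2=0$, because $\Sq^1\iota_2$ has degree~$3$. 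So Sullivan's space-level splitting of $\G/\TOP$ gives no information about it.

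Excluding such classes needs genuine infinite-loop information about the Whitney-sum structure, and that is exactly what the Madsen--Milgram theorem supplies. As it stands, your argument for the $\TOP$ case (and hence the $\PL$ case) amounts to citing that theorem, which is what the paper does. Your description of it via ``degree reasons'' would not work.
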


Now we state and prove our main result 
on $J^8$.

\begin{thm}\label{thm:J^8}
The Spivak normal bundle of $J^8$ is 
classified by a map $f\:J^8\to B\G$ 
such that the composition 
$J^8\to B\G\to B(\G/\PL)$ is null 
homotopic. Hence~$f$ factorises 
through a map $\tilde{f}\:J^8\to B\PL$,
and by composing with the natural map 
$B\PL\to B\TOP$ it also factors through 
a map $J^8\to B\TOP$.  
\[
\xymatrix{
& \ar@{.>}[dl]_{\tilde{f}}J^8\ar[d]^f\ar[dr]^{\sim0} & \\
B\PL\ar[r]\ar[d] & B\G\ar[r]\ar@{=}[d] & B(\G/\PL)\ar[d] \\
B\TOP\ar[r] & B\G\ar[r] & B(\G/\TOP)
}
\]
\end{thm}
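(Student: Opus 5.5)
Since $B(\G/\PL)$ is an infinite loop space with $\pi_*$ finitely generated and concentrated in degrees $\equiv 0,2 \pmod 4$ (for $\G/\PL$), the obstruction to lifting $f$ lies in cohomology of $J^8$, and I would split it into its $2$-local and odd-primary parts. Away from~$2$, $B(\G/\PL)[1/2]\simeq B\mathrm{BO}[1/2]$, so the relevant homotopy is $\Z[1/2]$ in degrees $\equiv 1\pmod 4$, i.e.\ $5,9,\dots$ in $B(\G/\PL)$. By Theorem~\ref{thm:JokerPD}, $H^*(J^8;\Z[1/2])$ is concentrated in degrees $0,4,8$. All groups $H^{n}(J^8;\pi_n B(\G/\PL)[1/2])$ therefore vanish, and likewise $H^{n+1}(J^8;\pi_n)$. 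Hence the odd-primary part of the composite $J^8\to B(\G/\PL)$ is null. By the arithmetic square, it remains to show that the $2$-localisation $J^8\to B(\G/\PL)_{(2)}$ is null. Since $J^8$ is a finite simply connected complex, this suffices.

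For the $2$-local part I use Theorem~\ref{thm:M&Mthm7.1}. Because $J^8$ has dimension~$8$, only the factors of dimension at most~$8$ matter:
\[
E_3\times K(\Z_{(2)},9)\times K(\F_2,7).
\]
The factor $K(\Z_{(2)},9)$ contributes nothing. The $K(\F_2,7)$ factor contributes $H^7(J^8;\F_2)=0$. So the whole question reduces to the component $J^8\to E_3$. I would use the fibration $K(\Z_{(2)},5)\to E_3\to K(\F_2,3)$ with $k$-invariant $\beta\Sq^2$. Then $H^3(J^8;\F_2)=\F_2\{u_3\}$, and the class $c$ in $H^3$ is a priori the obstruction of interest. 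The fibre contributes $H^5(J^8;\Z_{(2)})$, which from the stated integral cohomology ring (torsion $U_3$ in degree~$3$ only, $U_4$ in degree~$4$) is zero. Thus maps $J^8\to E_3$ are detected by the class $c\in H^3(J^8;\F_2)$, subject to $\beta\Sq^2 c=0$. This condition holds automatically, since $H^6(J^8;\Z_{(2)})$ is zero or torsion-free as appropriate.

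The main step is to show that $c=0$, i.e.\ that the degree-$3$ component of the normal invariant obstruction vanishes. This class is the first exotic characteristic class of the Spivak fibration, and it is known to be expressible through $\bar w$ and the Thom class action. Following Madsen--Milgram, the degree-$3$ component of $B\G\to B(\G/\TOP)_{(2)}$ is given by a secondary operation on the Thom class, of the kind detecting a nontrivial $\Sq^4$-type relation through $\Sq^2\Sq^1$ (compare the $\eta$/$\Sq^2$-relations). My first attempt is to compute it using the $H^*(J^8)\sharp\StA$-module picture of $H^*(M\nu)$ displayed above: $\Sq^1u=\Sq^2u=0$, while $\Sq^4u=u_2^2u$. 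In degree~$3$, the only candidate value is $u_3u$. I would argue that the relevant secondary operation is defined on $u$ with indeterminacy containing $\Sq^1$ and $\Sq^2$ images of degree-$2$ and degree-$1$ classes. The indeterminacy includes $\Sq^1(u_2u)=u_3u$, because $\Sq^1 u=0$ gives $\Sq^1(u_2u)=u_3u$. So $u_3u$ lies in the indeterminacy, and we may choose the lift so that $c=0$.

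Alternatively, the same conclusion follows from naturality. The class in $H^3(J^8;\F_2)$ is pulled back from $B\G$, where the degree-$3$ generator of $H^3(B\G;\F_2)$ is $w_3$ up to indeterminacy. In $J^8$ we have $\bar w_3=0$ and $\bar w_2=0$. This suggests that the $E_3$ component is trivial, and I regard this step as the expected obstacle.

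With the obstruction vanishing, the composite $J^8\to B(\G/\PL)$ is null. The fibre sequence $B\PL\to B\G\to B(\G/\PL)$ then gives the lift $\tilde f$, and composing with $B\PL\to B\TOP$ gives the $\TOP$ reduction.
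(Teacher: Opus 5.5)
There is a genuine gap. It sits where you expected the obstacle, but the obstacle comes from a false claim.

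You assert that the lifting condition $\beta\Sq^2 c=0$ ``holds automatically, since $H^6(J^8;\Z_{(2)})$ is zero or torsion-free''. That is wrong. From $H^*(J^8;\Z_{(2)})=\Z_{(2)}[U_4,U_3]/(U_4^3,2U_3,U_4U_3,U_3^3)$ one has $H^6(J^8;\Z_{(2)})\iso\Z/2$, generated by $U_3^2$. Universal coefficients also forces $2$-torsion there, since $H^5(J^8;\F_2)\neq0$ while $H^5(J^8;\Z_{(2)})=0$. Moreover $\beta\Sq^2u_3\neq0$, because its mod~$2$ reduction is $\Sq^1\Sq^2u_3=\Sq^3u_3=u_3^2=u_2^3\neq0$.

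This failure is exactly the key step of the paper's proof. The $k$-invariant of $E_3$ rules out $c=u_3$, so every map $J^8\to E_3$ has $c=0$ and factors through the fibre $K(\Z_{(2)},5)$. Then $H^5(J^8;\Z_{(2)})=0$ gives $[J^8,E_3]=0$. Combined with $H^7(J^8;\F_2)=0$ and dimension, this yields $[J^8,B(\G/\PL)_{(2)}]=0$. No information about the specific map $f$ is needed.

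Because you missed this, you try to compute the specific class $c$ coming from $f$, and neither of your two arguments works.

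The indeterminacy argument is not valid. The Spivak fibration is unique, so $c$ is a well-defined class and there is no lift to ``choose''. Even on its own terms, if $u_3u$ lay in the indeterminacy, the secondary operation would carry no information about $c$; it would not show $c=0$. For the natural candidate, the operation based on $\Sq^2\Sq^2+\Sq^3\Sq^1=0$ that detects $\eta^2$, the indeterminacy on $u$ is $\Sq^2H^1(M\nu)+\Sq^3H^0(M\nu)=0$ in any case.

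The naturality argument also fails. The relevant degree-$3$ class on $B\G$ is pulled back from $B(\G/\PL)$. Since $B\O\to B\G$ factors through $B\PL$, that class restricts to zero on $B\O$. So it is not $w_3$, nor any nonzero polynomial in Stiefel--Whitney classes, and the vanishing of $\bar{w}_2,\bar{w}_3$ says nothing about it.

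Your odd-primary argument and your reduction to the $E_3$ factor are fine and agree with the paper. Replace your ``main step'' with the observation that $\beta\Sq^2u_3\neq0$, and the proof closes.
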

\begin{proof}
We will use Theorem~\ref{thm:M&Mthm7.1}.
The odd degree integral cohomology of $J^8$ 
is trivial except for degree~$3$, so by 
inspection the only way a non-zero map 
$J^8\to B(\G/\PL)$ can occur is when 
the projection into the component~$E_3$ 
is nontrivial. 

Now there is a fibration sequence 
\[
K(\mathbb{Z},5)\to E_3 \to K(\mathbb{F}_2,3)
\]
and in the Serre spectral sequence the 
bottom generator $H^*(K(\mathbb{Z},5))$
transgresses to $\Sq^1\Sq^2z_3$ where 
$z_3\in H^3(K(\mathbb{F}_2,3))$ is the 
generator. Notice that
\begin{equation}\label{eq:ThmJ^8}
\Sq^1\Sq^2z_3 = \Sq^3z_3 = z_3^2.
\end{equation}
Since $H^5(J^8;\mathbb{Z})=0$, any 
non-trivial map $J^8\to E_3$ projects 
non-trivially to $\mathbb{F}_2$ and
so $z_3$ pulls back to $u_3$. But 
by \eqref{eq:ThmJ^8} this would 
imply that $u_3^2=0$ which is false. 

Therefore $[J^8,B\G/\PL]=0=[J^8,B\G/\TOP]$
and so the Spivak normal bundle lifts to 
a $\PL$-bundle at~$2$.
\end{proof}

\subsection*{The odd primary case}
Let's first consider the case of the prime~$3$.
Then 
\[
H^*(J^8;\Z_{(3)}) = \Z_{(3)}[U_4]/(U_4^3),
\quad
H^*(J^8;\F_3) = \F_3[u_4]/(u_4^3).
\]
However when we reduce modulo~$3$ we have 
to consider the Steenrod action. Tracing 
the construction back to $B\SO(3)$ we find 
that the generator~$u_4$ comes from the 
universal Pontrjagin class $p_1\in H^4(B\SO(3);\F_3)$
which in turn comes from $c_2\in H^4(B\SU(3);\F_3)$,
the reduction of the Chern class. A routine
calculation shows that 
$\StP^1c_2=c_2^2\in H^8(B\SU(3);\F_3)$ so 
$\StP^1u_4=u_4^2\in H^4(B\SU(3);\F_3)$.

Now according to~\cite{IM&RJM:Book}*{theorem~4.34(b)},
as spaces $\G/\PL[1/2]\sim B\O^\otimes[1/2]$. 
By a spectral sequence argument,
\[
H^*(B\G/\PL;\Z[1/2]) \iso H^*(B(B\O^\otimes);\Z[1/2])
\iso \Lambda_{B\O^\otimes}(e_{4k+1}:k\geq1).
\]
For our purposes it is enough to use the 
$8$-skeleton of a minimal CW structure, 
and
\[
H^*((B\G/\PL)^{[8]};\Z[1/2]) \iso 
\Lambda_{B\O^\otimes}(e_5:k\geq1).
\]
Thus at locally at $3$,
\[
(B\G/\PL)^{[8]}_{(3)} \sim K(\Z_{(3)},5)^{[8]}
\]
so $[J^8,(B\G/\PL)_{(3)}]=0$. For a prime 
$p>3$ a similar argument shows that 
$[J^8,(B\G/\PL)_{(p)}]=0$.

Therefore $[J^8,(B\G/\PL)[1/2]]=0$, and the 
Spivak normal bundle lifts to~$(B\PL)[1/2]$.

\section{Some calculations}\label{sec:Calculations}

Suppose that $X$ is a finite CW complex whose 
mod~$2$ cohomology is the unstable $\StA$-algebra 
which has the form shown in~\eqref{eq:H^*J8},
where the generators are $x_2,x_3$. We will 
determine $[X,B\SO]$, the group of isomorphism 
classes of stable orientable bundles on~$X$.
 Here $B\SO$ is the infinite loop space 
 $\Omega^{\infty}\kO\langle2\rangle$ so 
\[
[X,B\SO] \iso [\Sigma^\infty X,\kO\wedge\Sigma^2J]
\]
where $J$ is a CW spectrum whose cohomology 
as an $\StA(1)$-module is 
$H^*(J)\iso\StA(1)/\StA(1)\{\Sq^3\}$, i.e., 
has the form in~\eqref{eq:H^*J} desuspended 
twice (such spectra were shown to exist 
in~\cite{AB:Joker}). We can reinterpret 
the last group as 
$\mathscr{D}_{\kO}(\kO\wedge X,\kO\wedge\Sigma^2J)$, 
the mapping set in the $\kO$-module category 
$\mathscr{M}_{\kO}$ of~\cite{EKMM}. We can 
use Spanier-Whitehead duality to obtain
\[
\mathscr{D}_{\kO}(\kO\wedge X,\kO\wedge\Sigma^2J)
\iso
\mathscr{D}_{\kO}(\kO,\kO\wedge\Sigma^2J\wedge DX)
\]
where
\[
\kO\wedge\Sigma^2J\wedge DX \sim 
(\kO\wedge\Sigma^2J)\wedge_{\kO}(\kO\wedge DX)
\]
and as $\StA(1)$-modules
\[
H^*_{\kO}(\kO\wedge\Sigma^2J\wedge DX) 
\iso
H^*(\Sigma^2J)\otimes H^*(DX).
\]

Since $H^*_{\kO}(H)\iso\StA(1)$ there 
is an Adams spectral sequence
\begin{equation}\label{eq:ASS-kO}
\mathrm{E}_2^{s,t}=
\Ext_{\StA(1)}^{s,t}(H^*(\Sigma^2J)\otimes H^*(DX),\F_2)
\Lra 
\mathscr{D}_{\kO}(\kO,\kO\wedge\Sigma^{2+s-t}J\wedge DX).
\end{equation}
Of course we are interested in 
$\mathscr{D}_{\kO}(\kO,\kO\wedge\Sigma^{2}J\wedge DX)\iso [X,B\SO]$
which is computed from the terms $\mathrm{E}_2^{s,s}$
for~$s\geq0$. Now we are reduced to pure algebra. 

Notice that as $\StA(1)$-modules,
\begin{align*}
H^*(\Sigma^2J)\otimes H^*(DX) &\iso 
\StA(1)/\StA(1)\{\Sq^3\}\otimes\StA(1)/\StA(1)\{\Sq^3\}[-4]
\oplus \StA(1)/\StA(1)\{\Sq^3\}[-6] \\
&\iso 
\F_2 \oplus \StA(1)[-4]\oplus \StA(1)[-2]\oplus \StA(1)[-3]
\oplus \StA(1)/\StA(1)\{\Sq^3\}[-6].
\end{align*}
Here the $\StA(1)$-free summands contribute 
nothing in $\mathrm{E}_2^{s,s}$ while
\begin{align*}
\Ext^{s,s}_{\StA(1)}(\F_2,\F_2) &= \F_2\{h_0^s\}, \\
\Ext^{s,s}_{\StA(1)}(\StA(1)/\StA(1)\{\Sq^3\}[-6],\F_2) 
&= 
\Ext^{s,s+6}_{\StA(1)}(\StA(1)/\StA(1)\{\Sq^3\},\F_2) \\
&= \F_2\{h_0^sw\}
\end{align*}
where $w$ represents the Bott generator in 
$\pi_8(\kO\langle2\rangle)\iso\pi_8(\kO)$. 

It follows that 
\begin{equation}\label{eq:[X,BSO]}
[X,B\SO] \iso \Z_{(2)}\{\xi\}\oplus\pi_8(B\SO)
\end{equation}
where the second summand is pulled back from 
the top cell, and~$\xi$ restricted to the 
$3$-skeleton $X^{[3]}\simeq B\SO^{[3]}$ is 
induced by pulling back along the inclusion 
into~$B\SO$. The total Stiefel-Whitney class 
of~$\xi$ is
\[
w(\xi) = 1 + x_2 + x_3,
\]
and 
\[
w(2\xi) = 1 + x_2^2 + x_3^2.
\]
Therefore $2\xi$ is a candidate for a lift 
of the Spivak normal bundle of~$X$ 
from~$B\mathrm{G}$ to~$B\SO$.

\section{A homogeneous space realisation}\label{sec:Manifold}
The cohomology of a suitable manifold was 
determined by
Borel~\cite{ABo:CohomCertHomogSpces}*{theorem~13.7}
and Borel \& 
Hirzebruch~\cite{AB-&FH:CharClHomgSpces-I}*{section~17}.
This manifold also appears in work of Douglas et al
\cite{CLD-AGH-MAH:HomObstrStrOtns}.
\begin{thm}\label{thm:Manifold}
The exceptional Lie group\/ $\mathrm{G}_2$ contains 
a maximal subgroup of maximal rank isomorphic 
to\/~$\SO(4)$. Furthermore, the mod\/~$2$ 
cohomology of the homogeneous space 
$\mathrm{G}_2/\SO(4)$ satisfies
\[
H^*(\mathrm{G}_2/\SO(4)) \iso H^*(J^8)
\]
as unstable $\StA$-algebras.
\end{thm}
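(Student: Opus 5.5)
The plan is to compute $H^*(\mathrm{G}_2/\SO(4))$ from the fibration
\[
\mathrm{G}_2/\SO(4)\longrightarrow B\SO(4)\xrightarrow{\;B\iota\;} B\mathrm{G}_2 ,
\]
where $\iota\colon\SO(4)\to\mathrm{G}_2$ is the inclusion. We then identify the result with the presentation in Theorem~\ref{thm:JokerPD}.

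For the group theory we realise $\SO(4)$ inside $\mathrm{G}_2$ as the centraliser of an involution, equivalently the stabiliser of a quaternion subalgebra $\mathbb{H}\subset\mathbb{O}$. Both groups have rank~$2$, so this is a subgroup of maximal rank, and its maximality is the classical Borel--de Siebenthal statement. Restricting the $7$-dimensional representation $\mathrm{Im}\,\mathbb{O}=\mathrm{Im}\,\mathbb{H}\oplus\mathbb{H}^\perp$ of $\mathrm{G}_2$ to $\SO(4)$ gives
\[
V_7|_{\SO(4)}\iso \Lambda^2_+V_4\oplus V_4 ,
\]
where $V_4$ is the standard representation.

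Next we use the known mod~$2$ cohomology $H^*(B\mathrm{G}_2)=\F_2[w_4,w_6,w_7]$, with generators the Stiefel--Whitney classes of $V_7$, and $H^*(B\SO(4))=\F_2[w_2,w_3,w_4]$. The oriented $3$-plane bundle $\Lambda^2_+V_4$ has $w_2=w_2(V_4)$ and $w_3=w_3(V_4)$; this can be checked on the maximal $2$-torus, or via $\SO(4)\to\SO(3)$. Hence
\[
B\iota^*\,w(V_7)=(1+w_2+w_3)(1+w_2+w_3+w_4).
\]
In degrees $4$, $6$ and $7$ this gives the images
\[
w_4+w_2^2,\qquad w_2w_4+w_3^2,\qquad w_3w_4 .
\]
Killing the first class sets $w_4=w_2^2$. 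The remaining two classes then become $w_2^3+w_3^2$ and $w_2^2w_3$, which are exactly the relations of Theorem~\ref{thm:JokerPD}, with $x_2=w_2$ and $x_3=w_3$.

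The key step is to justify that $H^*(\mathrm{G}_2/\SO(4))$ is the quotient
\[
H^*(B\SO(4))\otimes_{H^*(B\mathrm{G}_2)}\F_2=\F_2[w_2,w_3,w_4]/(w_4+w_2^2,\;w_2w_4+w_3^2,\;w_3w_4),
\]
and I expect this to be the main obstacle. I would argue as follows.
\begin{itemize}
\item First, the composite $H^*(B\SO(4))\to H^*(\mathrm{G}_2/\SO(4))$ is surjective; I would try to prove this first via the Eilenberg--Moore spectral sequence.
\item Second, its kernel is generated by the three images above; for this I would also use Eilenberg--Moore.
\end{itemize}
I would then compare total dimensions. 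The quotient ring has $\F_2$-basis $1,w_2,w_3,w_2^2,w_2w_3,w_2^3,w_2^4$, since $w_2^5=0$ as in the proof of Theorem~\ref{thm:JokerPD}. The space $\mathrm{G}_2/\SO(4)$ has Euler characteristic $|W(\mathrm{G}_2)|/|W(\SO(4))|=3$, and its integral cohomology has $2$-torsion in degree~$3$, compatible with the integral computation in the proof of Theorem~\ref{thm:JokerPD}.

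The ideal is not generated by a regular sequence, so the higher Tor terms in the spectral sequence need care. If this becomes messy, I would instead appeal directly to Borel~\cite{ABo:CohomCertHomogSpces}*{theorem~13.7}, which identifies $H^*(\mathrm{G}_2/\SO(4))$ as a ring. An alternative route is to show that $\F_2[w_2,w_3]/(w_2^3+w_3^2,w_2^2w_3)$ is a Poincar\'e duality algebra of degree $8=\dim\mathrm{G}_2/\SO(4)$. If the map from this quotient to $H^*(\mathrm{G}_2/\SO(4))$ is an isomorphism in the top degree, then Proposition~\ref{prop:PDA-map} makes it injective. Surjectivity then follows by counting dimensions.

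Finally, the Steenrod algebra structure comes for free, because the map $H^*(B\SO(4))\to H^*(\mathrm{G}_2/\SO(4))$ is induced by a map of spaces and is therefore a map of unstable $\StA$-algebras. By the Wu formula $\Sq^1w_2=w_3$, and all other squares on $w_2,w_3$ are determined by instability together with the relations, as in Theorem~\ref{thm:JokerPD}. This yields the isomorphism $H^*(\mathrm{G}_2/\SO(4))\iso H^*(J^8)$ of unstable $\StA$-algebras.
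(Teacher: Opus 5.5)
The paper gives no argument of its own here. It simply quotes Borel's Theorem~13.7 and Borel--Hirzebruch, \S17. Your route is a genuine derivation, and its computations are correct:
the branching $V_7|_{\SO(4)}\iso\Lambda^2_+V_4\oplus V_4$;
the identity $w(\Lambda^2_+V_4)=1+w_2+w_3$, which checks out on the diagonal $(\Z/2)^3\subset\SO(4)$;
and the images $w_4+w_2^2$, $w_2w_4+w_3^2$, $w_3w_4$ of the generators of $H^*(B\mathrm{G}_2)$.
Compared with the citation, it identifies $u_2,u_3$ explicitly as $w_2,w_3$ pulled back along $\mathrm{G}_2/\SO(4)\to B\SO(4)\xrightarrow{\Lambda^2_+}B\SO(3)$. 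This parallels the way $J^8$ is built from $B\SO(3)$, and it reduces the $\StA$-compatibility to Wu's formula on both sides.

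One claim in your proposal is wrong, and correcting it removes what you call the main obstacle: the three relations \emph{do} form a regular sequence in $\F_2[w_2,w_3,w_4]$. To see this:
\begin{itemize}
\item $w_4+w_2^2$ is a non-zero-divisor, with quotient $\F_2[w_2,w_3]$.
\item In that quotient, $w_2w_4+w_3^2$ becomes $w_2^3+w_3^2$, which is nonzero in a domain.
\item $\F_2[w_2,w_3]/(w_2^3+w_3^2)\iso\F_2[t^2,t^3]\subset\F_2[t]$ via $w_2\mapsto t^2$, $w_3\mapsto t^3$. This is again a domain, so $w_3w_4\mapsto w_2^2w_3\mapsto t^7$ is a non-zero-divisor.
\end{itemize}
More generally, your finite-dimensional quotient already shows that the three elements form a homogeneous system of parameters in a Cohen--Macaulay ring, hence a regular sequence.

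Consequently the Koszul complex computing $\mathrm{Tor}_{H^*(B\mathrm{G}_2)}(\F_2,H^*(B\SO(4)))$ is acyclic in positive homological degrees. The Eilenberg--Moore spectral sequence, strongly convergent since $B\mathrm{G}_2$ is simply connected, is therefore concentrated on the $0$-line. This gives surjectivity and the kernel at once, with no extension problems. The answer is $\F_2[t^2,t^3]/(t^7)$, with basis in degrees $0,2,3,4,5,6,8$, so no fallback to Borel is needed.

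Your alternative via Proposition~\ref{prop:PDA-map} is incomplete as stated, though the argument above makes it unnecessary. The Euler characteristic $3$ gives no upper bound on $\dim_{\F_2}H^*(\mathrm{G}_2/\SO(4))$. You would also need to know independently that $w_2^4\neq0$ in $H^8(\mathrm{G}_2/\SO(4))$.

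Finally, $\Sq^2w_3=w_2w_3$ does not follow from instability alone. It comes from Wu's formula, or from $\Sq^1\Sq^2w_3=\Sq^3w_3=w_3^2\neq0$, and it holds on both sides.
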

\begin{conj}\label{conj:Manifold}
There is a $2$-local homotopy equivalence 
$\mathrm{G}_2/\SO(4)\xrightarrow{\;\simeq\;}J^8$.
\end{conj}
%

In \cite{AB&TB:Joker}*{section~6} we realised 
the double version of the unstable Joker 
starting with $B\mathrm{G}_2$. There is 
a doubled version of~$J^8$ which is a 
$16$-dimensional Poincar\'e duality complex
$J^{16}$. Since the cohomology of this is 
concentrated in even degrees, this admits 
a $\PL$-structure. It seems an interesting 
question whether it admits a smooth 
structure and if this gives a $\String$-manifold.



\end{document}